\DeclareMathOperator{\Z}{\mathbb{Z}}
\DeclareMathOperator{\R}{\mathbb{R}}
\theoremstyle{plain}
\newtheorem{thm}{Theorem}
\theoremstyle{definition}
\newtheorem*{defn}{Definition}
\newtheorem{lemma}[thm]{Lemma}
\begin{document}
\title[Almost consecutive partitions]{Partitions in which every term but the smallest one is consecutive}

\author{Rajat Gupta and Noah Lebowitz-Lockard}
\thanks{2010 \textit{Mathematics Subject Classification. Primary 11P84; Secondary 11P81.} \\
\textit{Keywords and phrases.} Consecutive sequences, partition identities, divisor function}
\address{Department of Mathematics, University of Texas at Tyler, Tyler, TX 75799.}
\email{rgupta@uttyler.edu, nlebowitzlockard@uttyler.edu}
\maketitle
\begin{abstract} In this article, we introduce the notion of \emph{almost consecutive} partitions. A partition is almost consecutive if every term is consecutive, with the possible exception of the smallest one. We find formulas relating to the smallest parts of consecutive and almost consecutive partitions. We also find an alternate combinatorial interpretation of the number of almost consecutive partitions of a given integer $n$ and an asymptotic formula for this quantity.
\end{abstract}

\section{Introduction}

Let $p(n)$ be the number of partitions of $n$, i.e., representations of $n$ as an (unordered) sum of positive integers. In addition, let $p_d (n)$ be the number of partitions of $n$ into \emph{distinct} parts. The study of partitions into distinct parts has a rich history beginning with Euler's Pentagonal Number Theorem, which we write below. (Note that the $k$th pentagonal number is $k(3k - 1)/2$.)

\begin{thm}[{\cite[Cor. $1.7$]{And1}}] Let $p_o (n)$ (resp., $p_e (n)$) be the number of partitions of $n$ into an odd (resp., even) number of distinct parts. Then,
\[p_e (n) - p_o (n) = \left\{\begin{array}{ll}
(-1)^k, & \textrm{if } n = k(3k - 1)/2 \textrm{ for some integer } k, \\
0, & \textrm{otherwise.}
\end{array}\right.\]
\end{thm}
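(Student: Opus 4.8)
The plan is to prove the identity combinatorially by constructing a sign-reversing involution on partitions into distinct parts, following Franklin. The organizing principle is the generating-function identity
\[\prod_{j \ge 1}(1 - q^j) = \sum_{n \ge 0}\bigl(p_e(n) - p_o(n)\bigr)q^n,\]
which holds because expanding the product and selecting the summand $-q^j$ from finitely many factors corresponds precisely to choosing a partition of $n$ into distinct parts, weighted by $-1$ raised to the number of chosen parts. Thus it suffices to pair up partitions of $n$ into an even number of distinct parts with those into an odd number, so that the contributions cancel except for a small set of unmatched partitions that will account for the right-hand side.

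First I would encode a partition $\lambda_1 > \lambda_2 > \cdots > \lambda_m$ into distinct parts by its Ferrers diagram and record two statistics: the smallest part $s := \lambda_m$ (the bottom row) and the \emph{slope} $\sigma$, the largest index $j$ such that $\lambda_i = \lambda_1 - (i - 1)$ for every $1 \le i \le j$, i.e., the number of rows in the maximal staircase of consecutive parts descending from the top. I would then define two moves. Operation $A$, available when $s \le \sigma$, deletes the bottom row and appends one cell to each of the $s$ largest parts; it yields a partition into distinct parts with one fewer part. Operation $B$, available when $\sigma < s$, removes the $\sigma$ rightmost cells along the slope and lays them down as a new bottom row; it yields a partition with one more part. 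Each move reverses the parity of $m$, and on their respective domains the two moves are mutually inverse, so together they give a sign-reversing involution whose verification (that the outputs are again strictly decreasing and that $A$ and $B$ undo one another) is a routine diagram check.

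The crux, and the step I expect to be the main obstacle, is to locate exactly the partitions on which \emph{neither} move succeeds, since these unmatched diagrams are all that survive in the signed sum. The failure can only happen when the slope reaches the bottom row, that is when $\sigma = m$, so that the cells of the smallest part and the cells of the slope compete for the same rows. A short case analysis of the two boundary collisions --- $s = \sigma = m$, where $A$ would need to distribute $m$ cells among only $m - 1$ remaining rows, and $\sigma = m$ with $s = \sigma + 1$, where $B$ would create two equal parts --- shows that the only fixed points are the staircases $(2k - 1, 2k - 2, \ldots, k)$ and $(2k, 2k - 1, \ldots, k + 1)$. These have $k$ parts and sums $k(3k - 1)/2$ and $k(3k + 1)/2 = (-k)(3(-k) - 1)/2$, respectively, each contributing weight $(-1)^k$. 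Since every other partition cancels in a $\pm$ pair, summing the surviving weights gives $p_e(n) - p_o(n) = (-1)^k$ when $n = k(3k - 1)/2$ for some integer $k$, and $0$ otherwise, as claimed.
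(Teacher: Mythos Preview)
Your proof is correct: it is Franklin's classical sign-reversing involution, the standard combinatorial argument for Euler's Pentagonal Number Theorem, and your identification of the two families of exceptional staircases and their weights is accurate. Note, however, that the paper does not supply its own proof of this statement; it is quoted purely as background with a citation to Andrews' textbook, so there is no authorial argument to compare against. Your write-up is essentially the proof one finds there and in most treatments of the subject.
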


This quantity relates to the generating function of the partition function as follows:
\[\left(\sum_{n = 1}^\infty p(n) q^n\right)^{-1} = \sum_{n = 1}^\infty (p_e (n) - p_o (n)) q^n.\]
So, the inverse of the generating function of $p(n)$ is easy to understand, even though $p(n)$ itself is not. Euler also noted that this relation gives us the following recurrence for partitions.

\begin{thm}[{\cite[Cor. $1.8$]{And1}}] For all $n \geq 1$, we have
\[p(n) + \sum_{i = 1}^\infty (-1)^i \left(p\left(n - \frac{i(3i - 1)}{2}\right) + p\left(n - \frac{i(3i + 1)}{2}\right)\right) = 0.\]
\end{thm}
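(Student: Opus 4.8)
The plan is to read the recurrence directly off the generating-function identity displayed just above the statement, using Euler's Pentagonal Number Theorem to make one of the two factors explicit. Write $P(q) = \sum_{n \geq 0} p(n) q^n$ (with the convention $p(0) = 1$) and $E(q) = \sum_{n \geq 0} (p_e(n) - p_o(n)) q^n$. The displayed identity says precisely that $E(q) = P(q)^{-1}$, i.e. $P(q) E(q) = 1$ as formal power series, so everything reduces to understanding $E(q)$ and then extracting a single coefficient.

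First I would rewrite $E(q)$ in closed form via the Pentagonal Number Theorem, which tells us that $p_e(n) - p_o(n)$ equals $(-1)^k$ when $n = k(3k-1)/2$ for some integer $k$ and $0$ otherwise. The one point requiring care is the bookkeeping over the sign of $k$: the positive values $k = 1, 2, 3, \dots$ produce the exponents $i(3i-1)/2$, while the negative values $k = -1, -2, -3, \dots$ produce the exponents $i(3i+1)/2$ (writing $i = -k > 0$), and in both cases $(-1)^k = (-1)^i$. Collecting these together with the single term from $k = 0$ yields
\[
E(q) = 1 + \sum_{i=1}^\infty (-1)^i \left( q^{i(3i-1)/2} + q^{i(3i+1)/2} \right).
\]

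With $E(q)$ in hand, the rest is a coefficient comparison. I would multiply out $P(q) E(q) = 1$ and read off the coefficient of $q^n$ on both sides. For $n \geq 1$ the right-hand side contributes nothing, so the coefficient of $q^n$ on the left must vanish. Since the only nonzero coefficients of $E(q)$ sit at exponent $0$ (value $1$) and at the exponents $i(3i \mp 1)/2$ (value $(-1)^i$), the coefficient of $q^n$ in $P(q)E(q)$ is exactly
\[
p(n) + \sum_{i=1}^\infty (-1)^i \left( p\!\left(n - \tfrac{i(3i-1)}{2}\right) + p\!\left(n - \tfrac{i(3i+1)}{2}\right) \right),
\]
where the leading $p(n)$ arises from pairing $p(n)$ with the constant term $1$ of $E(q)$. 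Setting this equal to $0$ is the claimed recurrence.

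Two routine remarks would close the argument. The displayed sum over $i$ is in fact finite for each fixed $n$: once $i(3i-1)/2 > n$, that term and all later ones involve $p$ of a negative argument, which we take to be $0$. The main conceptual obstacle is simply the sign-and-index bookkeeping in passing from the Pentagonal Number Theorem to the explicit series for $E(q)$; after that, the identity $P(q)E(q) = 1$ does all the work, and the recurrence is nothing more than its degree-$n$ coefficient.
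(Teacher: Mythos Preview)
Your proof is correct and follows exactly the line the paper sketches: the paper does not give a formal proof of this theorem but simply observes that the displayed generating-function identity (the reciprocal relation between $\sum p(n)q^n$ and $\sum (p_e(n)-p_o(n))q^n$) ``gives us the following recurrence,'' and your write-up is precisely the standard coefficient extraction that makes that remark precise. Your handling of the $k<0$ case and the convention $p(m)=0$ for $m<0$ is just the bookkeeping needed to turn the paper's one-sentence justification into an actual proof.
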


Uchimura later found an analogue for the Pentagonal Number Theorem. Instead of adding or subtracting $1$ depending on the parity of the number of parts, we add or subtract the smallest element. (The following theorem is more commonly attributed to Fokkink, Fokkink, and Wang \cite{FFW}, who later found an alternate proof. For yet another proof, see Andrews \cite{And2}. For further discussion and generalizations, see \cite{ABEM, GLV}.) 

From here on, we let $d(n)$ be the number of divisors of $n$. We also let $sp_o (n)$ (resp., $sp_e (n)$) be the sum of the smallest parts of the partitions of $n$ into an odd (resp., even) number of distinct parts.

\begin{thm}[{\cite{Uch}}] We have
\[sp_o (n) - sp_e (n) = d(n).\]
\end{thm}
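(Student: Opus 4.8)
The plan is to pass to generating functions. First I would rewrite the target quantity as a single signed sum: writing $s(\lambda)$ for the smallest part and $\ell(\lambda)$ for the number of parts of a partition $\lambda$ of $n$ into distinct parts,
\[ sp_o(n) - sp_e(n) = \sum_{\lambda \vdash n,\ \textrm{distinct}} (-1)^{\ell(\lambda)+1} s(\lambda), \]
since a partition with an odd (resp.\ even) number of parts contributes $+s(\lambda)$ (resp.\ $-s(\lambda)$). To assemble $F(q) = \sum_{n\ge1}(sp_o(n)-sp_e(n))q^n$ I would group the distinct-part partitions by their smallest part $m$: the remaining parts form an arbitrary finite subset of $\{m+1, m+2, \dots\}$, a subset of size $r$ carrying sign $(-1)^{r}$ and weight $q$ to its sum, while the part $m$ contributes a factor $m q^m$ and flips the sign once. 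This yields
\[ F(q) = \sum_{m=1}^\infty m q^m \prod_{j=m+1}^\infty (1-q^j). \]

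Next I would identify the right-hand side. Expanding $d(n)$ as a Lambert series gives $\sum_{n\ge1} d(n)q^n = \sum_{n\ge1} q^n/(1-q^n)$, so it suffices to prove that $F(q)$ equals this series. To transform the left-hand side I would invoke Euler's identity
\[ \prod_{j=0}^{\infty}(1-aq^{j}) = \sum_{k=0}^{\infty} \frac{(-1)^k q^{k(k-1)/2}}{\prod_{i=1}^{k}(1-q^i)}\, a^k \]
with $a = q^{m+1}$, substitute it for $\prod_{j>m}(1-q^j)$, and interchange the (formal) order of summation over $m$ and $k$. The inner sum $\sum_{m\ge1} m\, q^{m(k+1)} = q^{k+1}/(1-q^{k+1})^2$ is elementary, and after simplifying the resulting powers of $q$ and reindexing $n=k+1$ the left-hand side collapses to the classical Uchimura $q$-series
\[ \sum_{n=1}^\infty \frac{(-1)^{n-1} q^{n(n+1)/2}}{(1-q^n)\prod_{i=1}^{n}(1-q^i)}. \]

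It then remains to show that this equals $\sum_{n\ge1} q^n/(1-q^n)$, and this is where the main difficulty lies. My plan is to prove a finite refinement,
\[ \sum_{j=1}^{N} \frac{q^j}{1-q^j} = \sum_{n=1}^{N} (-1)^{n-1}\frac{q^{n(n+1)/2}}{1-q^n} \binom{N}{n}_q, \]
by induction on $N$, where $\binom{N}{n}_q$ is the Gaussian binomial coefficient; the inductive step uses the $q$-Pascal recurrence $\binom{N}{n}_q = \binom{N-1}{n}_q + q^{N-n}\binom{N-1}{n-1}_q$ and amounts to a telescoping computation. Letting $N\to\infty$, and using that $\binom{N}{n}_q \to 1/\prod_{i=1}^{n}(1-q^i)$ coefficientwise while each fixed power of $q$ receives contributions from only finitely many $n$, recovers the infinite identity and hence the theorem. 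The main obstacle is thus establishing and correctly telescoping the finite identity; a secondary, routine technical point is justifying the interchange of summation in the reduction, which is legitimate in the ring of formal power series since the term indexed by $(m,k)$ has $q$-order at least $m$.
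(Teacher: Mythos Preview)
The paper does not actually prove this theorem: it is quoted from Uchimura with a citation, and the only justification offered is the remark that the two generating functions
\[
\sum_{n\ge1}(sp_o(n)-sp_e(n))q^n \quad\text{and}\quad \sum_{n\ge1}\frac{q^n}{1-q^n}
\]
coincide, this equality being a known identity of Ramanujan and van Veen. Your proposal is precisely a proof of that generating-function identity, so in spirit you are filling in exactly what the paper leaves to the references.

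Your argument is sound. The expression $F(q)=\sum_{m\ge1} m\,q^m\prod_{j>m}(1-q^j)$ is the correct generating function for $sp_o-sp_e$ (the paper's displayed version carries an extraneous minus sign). The Euler expansion with $a=q^{m+1}$ and the swap of sums are legitimate formally, and the exponent bookkeeping does collapse the series to $\sum_{n\ge1}(-1)^{n-1}q^{n(n+1)/2}/\bigl((1-q^n)(q;q)_n\bigr)$ as you claim. The finite identity
\[
\sum_{j=1}^{N}\frac{q^j}{1-q^j}=\sum_{n=1}^{N}(-1)^{n-1}\frac{q^{n(n+1)/2}}{1-q^n}\binom{N}{n}_q
\]
is genuine (it is in Uchimura's paper and also appears in work of Van Hamme and Dilcher), and it does yield the infinite identity on letting $N\to\infty$. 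One caution: after applying $q$-Pascal, the difference $\mathrm{RHS}(N)-\mathrm{RHS}(N-1)$ reduces not to an immediate telescope but to the auxiliary identity $\sum_{m=0}^{N-1}(-1)^m q^{m(m+1)/2}\binom{N-1}{m}_q/(1-q^{m+1})=1/(1-q^N)$, which itself needs a short (also inductive or telescoping) argument; be prepared for that extra step when you write out the details.
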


Note that both sides of this equation have compact generating functions:
\begin{eqnarray*}
\sum_{n = 1}^\infty (sp_o (n) - sp_e (n)) q^n & = & -\sum_{n = 1}^\infty nq^n (1 - q^{n + 1})(1 - q^{n + 2}) \cdots, \\
\sum_{n = 1}^\infty d(n) q^n & = & \sum_{n = 1}^\infty \frac{q^n}{1 - q^n}.
\end{eqnarray*}
The fact that these generating functions are equal is also a special case of an identity of Ramanujan \cite[p. $354$]{Ram}. van Veen \cite{Klu} also found this equivalence in 1919, though neither he nor Ramanujan related it to the divisor function.

The smallest parts of partitions have received more attention over the past few decades, starting with Andrews' smallest part function $spt(n)$ \cite{And2}, which counts the number of appearances of the smallest parts in all partitions of $n$. 

Interestingly, Sylvester \cite[\S 46]{Syl} found another relation between partitions and divisors. (See \cite{Mas, HH} for a slight extension of this result.)

\begin{thm} The number of partitions of $n$ into a sum of distinct consecutive numbers is equal to the number of odd divisors of $n$.
\end{thm}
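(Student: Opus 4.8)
The plan is to set up an explicit bijection between partitions of $n$ into distinct consecutive integers and odd divisors of $n$, passing through the factorizations of $2n$. First I would write a typical such partition as
\[n = a + (a + 1) + \cdots + (a + k - 1),\]
with $a \geq 1$ and $k \geq 1$ the number of parts; summing this arithmetic progression gives
\[2n = k(2a + k - 1).\]
Thus every partition of this form determines a factorization $2n = km$ with $m = 2a + k - 1$, and conversely $a = (m - k + 1)/2$, so $a$ and $k$ can be recovered from the pair $(k, m)$.

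The key structural observation is a parity/size dichotomy. Since $m - k = 2a - 1$ is odd, the factors $k$ and $m$ always have opposite parity; in particular they are distinct, and since $a \geq 1$ we have $m = 2a + k - 1 \geq k + 1 > k$. Conversely, given any factorization $2n = km$ in which $k$ and $m$ have opposite parity and $k < m$, the difference $m - k$ is odd, so $a := (m - k + 1)/2$ is a positive integer, and $a + (a + 1) + \cdots + (a + k - 1)$ is a genuine partition of $n$ into $k$ distinct consecutive parts. This establishes a bijection between the partitions we wish to count and the factorizations $2n = km$ with $k < m$ of opposite parity.

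It remains to count the latter. Write $n = 2^e t$ with $t$ odd, so that $2n = 2^{e + 1} t$. In a factorization $2n = km$ of opposite parity, exactly one factor is odd; that odd factor, being an odd divisor of $2n$, is precisely an odd divisor $d$ of $n$ (equivalently a divisor of $t$), and its even partner is then forced to be $2n/d$. Since the two factors have opposite parity they are automatically distinct, so each such choice yields exactly one ordered pair with $k < m$. Hence the number of admissible factorizations equals the number of odd divisors of $n$, completing the count.

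The argument is essentially a clean bijection, so I do not anticipate a serious obstacle; the one point requiring care is checking that the two directions of the correspondence are mutually inverse and, in particular, that the constraint $a \geq 1$ matches exactly the constraint $k < m$ together with the integrality of $a$ forced by the opposite parity of $k$ and $m$. The boundary case $k = 1$, namely the one-part partition $n = n$, corresponds to the odd divisor $d = 1$, confirming that the trivial partition is counted correctly.
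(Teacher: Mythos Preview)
Your argument is correct. The factorization $2n = k(2a+k-1)$ with the parity/size dichotomy is the standard route, and you have checked the inverse direction carefully; in particular the observation that $d$ odd and $2n/d$ even forces $d \neq 2n/d$, so the pair always determines a unique $(k,m)$ with $k<m$, is exactly what is needed.

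The paper does not give its own proof of this statement; it attributes the result to Sylvester and only sketches the method in Section~2 (en route to the related Theorem~9). That sketch proceeds by splitting into two cases according to the parity of the number of parts, writing
\[n=(a-k)+\cdots+a+\cdots+(a+k)\quad\text{or}\quad n=(a-k+1)+\cdots+a+(a+1)+\cdots+(a+k),\]
and matching odd-length partitions with odd divisors $d<\sqrt{2n}$ and even-length partitions with odd divisors $d>\sqrt{2n}$. Your version is the same idea packaged more uniformly: rather than splitting on the parity of the length, you note once that the two factors $k$ and $m=2a+k-1$ of $2n$ have opposite parity, and let the odd one (whichever it is) be the associated odd divisor. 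This buys a slightly cleaner bijection, while the paper's case split has the advantage of immediately yielding the finer information in Theorem~8 about which divisors correspond to odd versus even length.
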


In this note, we relate Theorems $3$ and $4$ and prove several additional formulas. Let $sc_o (n)$ and $sc_e (n)$ be the sums of the smallest parts of the partitions of $n$ into an odd and even number of distinct consecutive parts. We show the following.

\begin{thm} Let $n$ be an integer and let $m$ be the largest odd divisor of $n$. Then
\[sc_o (n) - sc_e (n) = \frac{1}{2} (\sigma(n) + \#\{d | m : d < \sqrt{2n}\} - \# \{d | m : d > \sqrt{2n}\}).\]
\end{thm}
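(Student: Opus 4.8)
The plan is to parametrize each partition of $n$ into distinct consecutive parts by its smallest part $a \geq 1$ and its number of parts $k \geq 1$, so that $n = ka + \binom{k}{2}$, equivalently $2n = k(2a + k - 1)$. Setting $x = k$ and $y = 2a + k - 1$ produces a factorization $2n = xy$ in which $y - x = 2a - 1$ is odd (so $x, y$ have opposite parity) and $y > x$; conversely, every factorization of $2n$ into two factors of opposite parity arises this way, with $k = \min(x,y)$ the number of parts and $a = (|x - y| + 1)/2$ the smallest part. Writing $n = 2^e m$ with $m$ odd, the odd factor of such a pair is exactly an odd divisor $d | m$ and the even factor is $2n/d$, so these factorizations are in bijection with the odd divisors of $n$. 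This recovers Sylvester's count (Theorem 4) and will be the engine of the computation.

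Next I would sort the bijection by the parity of $k$. Since $d$ is odd and $2n/d$ is even, the number of parts $k = \min(d, 2n/d)$ is odd precisely when $d < 2n/d$, i.e. $d < \sqrt{2n}$, and even precisely when $d > \sqrt{2n}$. Because $d^2 = 2n$ is impossible for odd $d$, no divisor of $m$ lies exactly on $\sqrt{2n}$, so the two cases partition the divisors of $m$ cleanly. Hence the odd-length partitions correspond to $d \in \{d | m : d < \sqrt{2n}\}$ with smallest part $a = (2n/d - d + 1)/2$, and the even-length partitions correspond to $d \in \{d | m : d > \sqrt{2n}\}$ with smallest part $a = (d - 2n/d + 1)/2$.

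Summing these smallest parts gives
\[
sc_o(n) - sc_e(n) = \frac{1}{2}\sum_{d | m,\, d < \sqrt{2n}} \left(\frac{2n}{d} - d + 1\right) - \frac{1}{2}\sum_{d | m,\, d > \sqrt{2n}} \left(d - \frac{2n}{d} + 1\right).
\]
The $\pm d$ and $\pm 2n/d$ contributions recombine over all $d | m$, leaving $\tfrac{1}{2}\bigl(\sum_{d|m} 2n/d - \sum_{d|m} d\bigr)$ together with $\tfrac{1}{2}(\#\{d | m : d < \sqrt{2n}\} - \#\{d | m : d > \sqrt{2n}\})$. I would then invoke $\sum_{d | m} 1/d = \sigma(m)/m$ to get $\sum_{d|m} 2n/d = 2n\,\sigma(m)/m = 2^{e+1}\sigma(m)$, and the multiplicativity identity $\sigma(n) = \sigma(2^e)\sigma(m) = (2^{e+1} - 1)\sigma(m)$, so that $2^{e+1}\sigma(m) - \sigma(m) = \sigma(n)$. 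The divisor sums collapse to $\sigma(n)$ and the stated formula follows.

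There is no deep obstacle here; the work is entirely bookkeeping, and the points demanding care are (i) assigning $\min/\max$ and the dichotomy $d \lessgtr \sqrt{2n}$ correctly, including the edge case $k = 1$ and the verification that $d = \sqrt{2n}$ never occurs, so the partition of divisors into the two sets is exact; and (ii) the clean telescoping of the two divisor sums into $\sigma(n)$ via $\sum_{d|m} 1/d = \sigma(m)/m$ and $\sigma(n) = (2^{e+1}-1)\sigma(m)$. Checking a couple of small cases (e.g. $n = 3$ and $n = 9$) against the formula would confirm that the parity assignments are oriented correctly.
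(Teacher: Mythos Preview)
Your proof is correct and follows essentially the same approach as the paper: both establish the bijection between consecutive partitions of $n$ and odd divisors $d\mid m$, split into the cases $d<\sqrt{2n}$ (odd length) and $d>\sqrt{2n}$ (even length) with the same smallest-part formulas, and collapse the resulting divisor sums via $\sum_{d\mid m}1/d=\sigma(m)/m$ and $\sigma(n)=(2^{e+1}-1)\sigma(m)$. The only cosmetic difference is that the paper parametrizes odd- and even-length cases separately from the start, whereas you use the single factorization $2n=k(2a+k-1)$ and then branch on which factor is smaller.
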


In addition to considering partitions into consecutive parts, we also investigate partitions which are very close to being consecutive. From here on, we  let $\#(\lambda)$ and $s(\lambda)$ be the length and smallest part of a given partition $\lambda$. We say that a partition $\lambda$ with distinct parts is \emph{almost consecutive} if every part is consecutive with the possible exception of the smallest one. For example, $2 + 6 + 7 + 8$ is an almost consecutive partition of $23$.

\begin{defn} Let $P_d (n)$ be the set of all partitions of $n$ into distinct parts, $P_c (n)$ the partitions into consecutive parts, and $P_a (n)$ the almost consecutive partitions.
\end{defn}

For example,
\begin{align*}
P_d (7) & = \{7, 6 + 1, 5 + 2, 4 + 3, 4 + 2 + 1\}, \\
P_c (7) & = \{7, 4 + 3\}, \\
P_a (7) & = \{7, 6 + 1, 5 + 2, 4 + 3\}.
\end{align*}

We show that it is possible to sum certain partition functions over the almost consecutive partitions and find a few notable results.

\begin{thm} \label{main theorem} Let $f$ be a function acting on partitions into distinct parts which only depends on the length and smallest part of a partition. (In other words, there exists a function $F: \Z_+^2 \to \R$ such that $f(\lambda) = F(\#(\lambda), s(\lambda))$ for all partitions $\lambda$ with distinct parts.) For a given integer $n \geq 3$, we have
\begin{eqnarray*}
\sum_{\lambda \in P_a (n)} f(\lambda) & = & 2\sum_{\lambda \in P_d (n)} f(\lambda) + \sum_{\lambda \in P_c (n + 1)} f(\lambda) - \sum_{\lambda \in P_d (n + 1)} f(\lambda) - \sum_{\lambda \in P_d (n - 2)} f(\lambda) \\
& & + \sum_{i = 1}^{\lfloor (n - 3)/2 \rfloor} f((i, n - i)) + f((n - 2)) - f((n)).
\end{eqnarray*}
\end{thm}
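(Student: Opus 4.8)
The plan is to establish a bijective correspondence between almost consecutive partitions and certain combinations of distinct and consecutive partitions, then translate this correspondence into the stated identity by applying $f$ to both sides. The key structural observation is that an almost consecutive partition of $n$ is, by definition, a consecutive block together with one possibly-detached smallest part. So the natural approach is to classify the partitions in $P_a(n)$ according to whether the smallest part is itself consecutive with the rest (making it a genuine consecutive partition in $P_c(n)$) or sits strictly below the consecutive block with a gap.

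Let me think about the structure carefully. An almost consecutive partition has the form $(s, a, a+1, \dots, a+k)$ where $s < a$ (with a gap, i.e.\ $s \le a-2$ for it to be genuinely almost-but-not consecutive, or $s = a-1$ which makes it fully consecutive), or it is a single consecutive block. So $P_a(n) = P_c(n) \sqcup \{\text{partitions with a detached smallest part}\}$. For the detached case, removing the smallest part $s$ leaves a consecutive partition of $n - s$ whose smallest part exceeds $s+1$. My first step would be to set up exactly this decomposition and parametrize the detached-smallest-part partitions by the pair $(s, \text{consecutive block of } n-s)$, being careful about the gap condition and the length bookkeeping (the length of the whole partition is one more than the length of the block).

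**The main computational step** is to express the sum over these detached configurations using the function $F(\#(\lambda), s(\lambda))$. Since $f$ only depends on length and smallest part, I would write $\sum_{\lambda \in P_a(n)} f(\lambda) = \sum_{\lambda \in P_c(n)} f(\lambda) + \sum (\text{detached terms})$, where each detached term contributes $F(\ell+1, s)$ for a consecutive block of length $\ell$ and smallest element $\ge s+2$. To match the right-hand side of Theorem~\ref{main theorem}, I expect to use Sylvester's theorem (Theorem~4) and the relationship between consecutive and distinct partitions to convert sums over $P_c$ into combinations of sums over $P_d(n)$, $P_d(n\pm 1)$, and so forth. The explicit correction terms $\sum_{i=1}^{\lfloor(n-3)/2\rfloor} f((i, n-i))$, together with $f((n-2))$, $f((n))$, and the shifts by $n+1$ and $n-2$, strongly suggest a telescoping or inclusion–exclusion identity among the various partition classes; I would aim to identify a generating-function or direct-injection identity of the form $P_a(n)$ versus $2P_d(n) + P_c(n+1) - P_d(n+1) - P_d(n-2)$ at the level of the underlying partition sets, then isolate the boundary terms that fail to cancel.

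**The hard part will be** pinning down the exact boundary corrections: the terms $f((i,n-i))$ (partitions into exactly two parts), the isolated $f((n-2))$ and $f((n))$, and the precise off-by-one shifts in $n$. These are exactly the kind of edge cases where a clean bijection breaks down, and getting the summation range $\lfloor(n-3)/2\rfloor$ correct requires tracking which small configurations are over- or under-counted when one naively equates $P_a(n)$ with a signed sum of the other families. I would attack this by first proving the identity for the underlying counting functions (the case $f \equiv 1$, recovering a relation among $|P_a(n)|$, $|P_d(n)|$, $|P_c(n+1)|$, etc.), verifying it against the worked example $P_a(7)$, and only then lifting to general $f$ by checking that every bijection or inclusion–exclusion step preserves the $(\#(\lambda), s(\lambda))$ pair that $f$ sees.
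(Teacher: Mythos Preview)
Your plan has a genuine gap: the decomposition you propose does not preserve the data that $f$ sees, and it gives no mechanism for producing the sums over $P_d$ that dominate the right-hand side. When you strip off the detached smallest part $s$ from $\lambda\in P_a(n)\setminus P_c(n)$, the remaining consecutive block $\mu$ has $\#(\mu)=\#(\lambda)-1$ and $s(\mu)\ge s+2$, so $f(\lambda)=F(\#(\lambda),s)$ is \emph{not} $f(\mu)$; you cannot rewrite the detached sum as a sum of $f$ over any natural family of partitions. Moreover, your decomposition naturally produces $P_c(n)$ and $P_c(n-s)$, whereas the identity involves $P_c(n+1)$ and the three $P_d$ sums; Sylvester's theorem only counts $|P_c(m)|$ and says nothing that would convert consecutive-partition sums into distinct-partition sums, so the hoped-for ``relationship between consecutive and distinct partitions'' is not available.

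The paper's argument runs in the opposite direction. It defines a map $g:P_d(n+1)\setminus P_c(n+1)\to P_d(n)$ that locates the largest index $m$ with $\lambda_m>\lambda_{m-1}+1$ and replaces $\lambda_m$ by $\lambda_m-1$; crucially, $g$ preserves both $\#(\lambda)$ and $s(\lambda)$, so $f$ is invariant under it. One then classifies $|g^{-1}(\lambda)|$ for $\lambda\in P_d(n)\setminus\{(n)\}$: it equals $1$ when $\lambda$ is almost consecutive with $\lambda_k-\lambda_{k-1}\le 2$, equals $1$ when $\lambda_k-\lambda_{k-1}>2$ (and a second bijection sends this class to $P_d(n-2)\setminus\{(n-2)\}$), and equals $2$ otherwise. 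Summing $f$ over $P_d(n+1)$, grouped by $g$-fibres, is what manufactures the combination $2\sum_{P_d(n)}-\sum_{P_d(n+1)}-\sum_{P_d(n-2)}+\sum_{P_c(n+1)}$; the two-part and singleton corrections then fall out when one reconciles the set $\mathcal S_1$ with all of $P_a(n)$. None of this is visible from the ``remove the smallest part'' viewpoint, and that viewpoint would have to be abandoned to reach the stated formula.
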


We prove Theorem \ref{main theorem} in Section $4$. In Section $5$, we examine the specific cases where $f(\lambda) = 1$, $f(\lambda) = (-1)^{\# (\lambda)}$, and $f(\lambda) = (-1)^{\# (\lambda)} s(\lambda)$. The last one gives us an analogue of Theorem $5$ for almost consecutive partitions. 

Let $p_d (n)$ and $p_a (n)$ equal $\# P_d (n)$ and $\# P_a (n)$, respectively. Hardy and Ramanujan \cite[\S 7.1]{HR} found the following asymptotic formula for the number of partitions of $n$ into distinct parts:
\[p_d (n) \sim \frac{1}{4 \sqrt[4]{3} n^{3/4}} \exp\left(\pi \sqrt{\frac{n}{3}}\right).\]
As an application of our techniques, we find an asymptotic formula for the the number of partitions of $n$ into almost consecutive parts:
\[p_a (n) \sim \frac{\pi}{(8 \cdot 3^{3/4}) n^{5/4}} \exp\left(\pi \sqrt{\frac{n}{3}}\right).\]

Finally, we provide the following equivalence for almost consecutive partitions.

\begin{thm} \label{final theorem} The number of almost consecutive partitions of $n$ is equal to the number of triplets $(a, b, r)$ for which
\[n = 1 + 2 + \cdots + (r - 2) + a(r - 1) + br\]
with $a, b$ positive and $r \geq 2$.
\end{thm}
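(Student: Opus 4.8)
The plan is to prove Theorem \ref{final theorem} by conjugation, which is natural here because the right-hand side
\[ n = 1 + 2 + \cdots + (r-2) + a(r-1) + br \]
is itself describing a partition of $n$: one in which each of $1, 2, \ldots, r-2$ occurs once, the part $r-1$ occurs $a$ times, and the largest part $r$ occurs $b$ times. So the theorem is really the assertion that almost consecutive partitions are equinumerous with partitions of this special shape, and a size-preserving involution ought to identify the two. First I would record the shape of a nontrivial almost consecutive partition: apart from the single-part partition $(n)$, each one consists of a smallest part $s \ge 1$ together with a run of consecutive parts $c, c+1, \ldots, c+t-1$ with $t \ge 1$ and $c \ge s+1$.

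Next I would conjugate such a partition and read off the conjugate row by row. For $1 \le i \le s$ all $t+1$ parts are at least $i$; for $s < i < c$ exactly the $t$ upper parts survive; and for $c \le i \le c+t-1$ the count of surviving parts decreases from $t$ down to $1$. Hence the conjugate has largest part $t+1$ with multiplicity $s$, part $t$ with multiplicity $c-s$, and each of $1, 2, \ldots, t-1$ exactly once. Setting $r = t+1$, $a = c-s$, and $b = s$, this is precisely a partition of the form above, and the inequalities $t \ge 1$, $c \ge s+1$, $s \ge 1$ translate exactly into $r \ge 2$, $a \ge 1$, $b \ge 1$. Because conjugation preserves the integer being partitioned, the defining equation $n = 1 + \cdots + (r-2) + a(r-1) + br$ then holds automatically, and because conjugation is an involution, running the computation backwards recovers a unique almost consecutive partition from each triplet. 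This yields the desired bijection.

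The step I expect to require the most care is the boundary bookkeeping around the degenerate and smallest cases. I would check that the empty staircase is handled correctly when $r=2$, where $1 + \cdots + (r-2)$ is vacuous and the conjugate is simply $1^a 2^b$; that the three blocks of the conjugate (values $t+1$, $t$, and $1, \ldots, t-1$) never collide, which holds since these values are distinct; and, most delicately, that the one-part partition $(n)$ is accounted for consistently with the statement, since its conjugate is the single column $1^n$, which lies just outside the triplet shape $1, \ldots, r-2, (r-1)^a, r^b$ (whose largest part is forced to be $r \ge 2$). Pinning down this last convention is the crux; once it is settled, the conjugation bijection of the second paragraph completes the argument.
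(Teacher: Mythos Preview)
Your conjugation argument is correct and takes a genuinely different route from the paper's. The paper proves Theorem~\ref{final theorem} via generating functions: parametrising an almost consecutive partition with at least two parts by its smallest part $k$, the number $\ell$ of consecutive upper parts, and the gap $m$, it computes
\[
\sum_{k,\ell,m\ge 1} q^{\,k(\ell+1)+m\ell+\ell(\ell-1)/2}
=\sum_{\ell\ge 1}\frac{q^{(\ell+1)(\ell+2)/2}}{(1-q^{\ell})(1-q^{\ell+1})}
=\sum_{r\ge 2}\frac{q^{\,r(r+1)/2}}{(1-q^{r-1})(1-q^{r})},
\]
and then reads the last expression as the generating function for the triplets $(a,b,r)$. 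Your approach replaces this algebra by a single size-preserving involution: the conjugate of $(s,\,c,\,c+1,\ldots,\,c+t-1)$ is exactly the partition $r^{\,b}(r-1)^{a}(r-2)(r-3)\cdots 2\cdot 1$ with $(r,a,b)=(t+1,\,c-s,\,s)$, and the constraints $t\ge 1$, $c\ge s+1$, $s\ge 1$ translate precisely into $r\ge 2$, $a\ge 1$, $b\ge 1$. This is shorter and explains \emph{why} the two counts coincide, whereas the generating-function computation merely verifies that they do.

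Your caution about the one-part partition $(n)$ is well placed and is not just a convention to be ``settled'': its conjugate $1^{n}$ has largest part $1$, so it corresponds to no triplet with $r\ge 2$, yet $(n)\in P_a(n)$ by the paper's own example $P_a(7)=\{7,\,6+1,\,5+2,\,4+3\}$. The paper's proof has the same feature, since the sum over $k,\ell,m\ge 1$ counts only almost consecutive partitions with at least two parts. In other words, both arguments actually establish the bijection between $P_a(n)\setminus\{(n)\}$ and the set of triplets; you should state this explicitly rather than leave it as a loose end. With that one caveat made precise, your bijective proof is complete.
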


By removing the numbers $1 + 2 + \cdots + r$ from this sum and decreasing $a$ and $b$ by $1$, we can also observe that the number of almost consecutive partitions of $n$ is equal to the number of triplets $(a, b, r)$ with $a, b \geq 0$ and $r \geq 2$ satisfying $n - (r(r + 1)/2) = a(r - 1) + br$.

\section{Consecutive partitions}

From here on, we let $P_c (n)$ be the set of partitions of $n$ into distinct consecutive parts. In order to prove Theorem $5$, we need to use a lemma related to $P_c (n)$. Sylvester \cite[\S 46]{Syl} proved that $\# P_c (n)$ is equal to the number of odd divisors of $n$. Mason \cite{Mas} (see also \cite{HH}) extended this result.

\begin{thm} Let $n$ be an integer with largest odd divisor $m$. The number of ways to express $n$ as a sum of an odd number of consecutive numbers is equal to the number of divisors of $m$ which are $< \sqrt{2n}$ and the number of ways to use an even number of consecutive numbers is equal to the number of divisors of $m$ which are $> \sqrt{2n}$.
\end{thm}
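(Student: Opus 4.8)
The plan is to set up an explicit bijection between representations of $n$ as a sum of consecutive positive integers and certain factorizations of $2n$, and then to read off the parity of the number of terms from the factorization. First I would write a sum of $k$ consecutive integers beginning at $a \geq 1$ as $n = ka + k(k-1)/2$, so that
\[ 2n = k(2a + k - 1). \]
Setting $\ell = 2a + k - 1$, this becomes $2n = k\ell$. Two features of this factorization are immediate and will do all the work: since $a \geq 1$ we have $\ell = 2a + k - 1 \geq k + 1 > k$, so the two factors are distinct with $k < \ell$; and $k$ and $\ell = 2a + k - 1$ always have opposite parity, since their difference $\ell - k = 2a - 1$ is odd. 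Conversely, any factorization $2n = k\ell$ with $k < \ell$ and $k, \ell$ of opposite parity recovers a valid representation via $a = (\ell - k + 1)/2$, which is a positive integer precisely because $\ell - k$ is odd and positive. This gives a bijection between representations of $n$ into distinct consecutive parts and factorizations $2n = k\ell$ with $k < \ell$ of opposite parity, under which the number of terms equals $k$.

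Next I would classify such factorizations by their odd factor. Writing $n = 2^s m$ with $m$ odd (so $m$ is the largest odd divisor of $n$ and $2n = 2^{s+1} m$), in any factorization $2n = k\ell$ with $k, \ell$ of opposite parity exactly one factor is odd, and that odd factor must be an odd divisor of $2n$, hence a divisor of $m$. Conversely, each divisor $d$ of $m$ produces exactly one factorization $2n = d \cdot (2n/d)$ in which $d$ is odd and $2n/d = 2^{s+1}(m/d)$ is even. So the factorizations arising in the bijection are in one-to-one correspondence with the divisors $d$ of $m$, once we decide whether the odd factor is the smaller or the larger member of the pair.

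Finally I would read off the parity. If the odd divisor $d$ is the smaller factor, then $k = d$ is the number of terms and it is odd; this case occurs exactly when $d < 2n/d$, i.e. when $d < \sqrt{2n}$. If instead $d$ is the larger factor, then $k = 2n/d$ is even, and this occurs exactly when $d > \sqrt{2n}$. The boundary case $d = \sqrt{2n}$ cannot occur, since it would force $d^2 = 2n$, impossible with $d$ odd and $2n$ even. Hence the representations with an odd number of terms are in bijection with the divisors of $m$ below $\sqrt{2n}$, and those with an even number of terms with the divisors above $\sqrt{2n}$, as claimed.

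There is no deep obstacle here; the argument is elementary once the factorization $2n = k(2a + k - 1)$ is in hand. The only point requiring genuine care is the bookkeeping of parities — keeping straight that the number of terms $k$ has parity opposite to $\ell$, and confirming that the two cases ($d$ small versus $d$ large) are exhaustive and disjoint, which hinges precisely on the fact that $\sqrt{2n}$ is never a (necessarily odd) divisor of $m$.
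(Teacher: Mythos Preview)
Your proof is correct and follows essentially the same route the paper sketches: both arguments set up a bijection between consecutive-part representations and odd divisors of $n$ via the arithmetic-progression sum formula, with the parity of the length determined by whether the odd divisor lies below or above $\sqrt{2n}$. The only cosmetic difference is that the paper parameterizes the sum by its center (writing $n=(2k+1)a$ or $n=k(2a+1)$), whereas you parameterize by the smallest term and length (writing $2n=k\ell$ with $k<\ell$ of opposite parity); these are equivalent bookkeepings of the same factorization.
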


The proof of the previous result relies on classifying the solutions to the equations
\[n = (a - k) + (a - k + 1) + \cdots + a + \cdots + (a + k),\]
\[n = (a - k + 1) + (a - k + 2) + \cdots + a + (a + 1) + \cdots + (a + k).\]
We use a similar technique to prove Theorem $5$.

\begin{thm} Let $n$ be an integer with largest odd divisor $m$. We have
\[\sum_{\lambda \in P_c (n)} (-1)^{\#(\lambda)} s(\lambda) = -\frac{1}{2} (\sigma(n) + \#\{d | m : d < \sqrt{2n}\} - \#\{d | m : d > \sqrt{2n}\}).\]
\end{thm}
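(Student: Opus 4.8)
The plan is to parametrize the consecutive partitions of $n$ explicitly and turn the signed sum into a divisor sum. A partition into $\ell$ distinct consecutive parts with smallest part $a$ is $a + (a+1) + \cdots + (a + \ell - 1)$, whose value is $\ell a + \binom{\ell}{2} = \ell(2a + \ell - 1)/2$. Hence such a partition of $n$ corresponds to a solution of $\ell(2a + \ell - 1) = 2n$ with $a, \ell \geq 1$, and in the notation of the statement $\#(\lambda) = \ell$ and $s(\lambda) = a$. First I would record the key parity fact: since $2a$ is even, the factors $\ell$ and $2a + \ell - 1$ always have opposite parity, so exactly one of them is odd. Writing the odd factor as $f$, it must divide the largest odd divisor $m$ of $n$, and the even factor is then $2n/f$. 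Moreover $a \geq 1$ gives $2a + \ell - 1 \geq \ell + 1 > \ell$, so $\ell$ is always the \emph{smaller} of the two factors.

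Next I would compare $f$ with $2n/f$, equivalently with $\sqrt{2n}$ (noting $f = \sqrt{2n}$ is impossible by parity), to split the divisors $f \mid m$ into two classes. If $f < \sqrt{2n}$ then $\ell = f$ is odd, so the sign is $-1$ and $a = (2n/f - f + 1)/2$; if $f > \sqrt{2n}$ then $\ell = 2n/f$ is even, so the sign is $+1$ and $a = (f - 2n/f + 1)/2$. This is precisely the classification underlying Mason's theorem stated above, so it also identifies which partitions have an odd versus an even number of parts, and it converts the target into a sum over $f \mid m$.

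With this dictionary in hand I would observe that in both classes the signed contribution $(-1)^{\ell} a$ equals $(f - 2n/f)/2$ together with a correction of $-\tfrac12$ when $f < \sqrt{2n}$ and $+\tfrac12$ when $f > \sqrt{2n}$. Summing the main term over all $f \mid m$ gives $\tfrac12 \sum_{f \mid m} f - \sum_{f \mid m} (n/f)$; writing $n = 2^s m$ and using that $f \mapsto m/f$ permutes the divisors of $m$, the second sum is $2^s \sigma(m)$, so the main term collapses to $(\tfrac12 - 2^s)\sigma(m)$. The corrections assemble into $-\tfrac12(\#\{d \mid m : d < \sqrt{2n}\} - \#\{d \mid m : d > \sqrt{2n}\})$, which is exactly the second piece of the claimed formula.

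The final step, and the only place a short computation is genuinely needed, is to reconcile $(\tfrac12 - 2^s)\sigma(m)$ with the target $-\tfrac12 \sigma(n)$. Since $\gcd(2^s, m) = 1$ and $\sigma$ is multiplicative, $\sigma(n) = \sigma(2^s)\sigma(m) = (2^{s+1} - 1)\sigma(m)$, whence $(\tfrac12 - 2^s)\sigma(m) = -\tfrac12(2^{s+1} - 1)\sigma(m) = -\tfrac12 \sigma(n)$. I do not expect a serious obstacle here: the substantive content lies entirely in the bijection between consecutive partitions and divisors of $m$ and in tracking the sign $(-1)^{\ell}$ through the two cases. The only things to watch are the bookkeeping of the $\pm\tfrac12$ corrections and the boundary case $\ell = 1$ (with $f = 1$, giving the one-part partition $n$ itself with contribution $-n$), both of which are routine.
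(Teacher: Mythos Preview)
Your proposal is correct and follows essentially the same approach as the paper: both set up the bijection between consecutive partitions of $n$ and odd divisors of $n$, split according to whether the odd divisor is below or above $\sqrt{2n}$ to separate odd-length from even-length partitions, and then sum the resulting expressions for the signed smallest part, simplifying via $\sigma(n)=(2^{s+1}-1)\sigma(m)$. The only cosmetic difference is that the paper parametrizes by the middle term(s) of the consecutive run while you parametrize by the smallest part and length, but the computations and the decomposition into a $-\tfrac12\sigma(n)$ main term plus the $\pm\tfrac12$ correction terms are the same.
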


\begin{proof} Let $n = 2^b m$ with $m$ odd. The representations of $n$ as a sum of an odd number of consecutive terms have the form
\[n = (a - k) + (a - k + 1) + \cdots + a + \cdots + (a + k).\]
In this case, $n = (2k + 1)a$. Letting $d = 2k + 1$, we see that $d$ is odd and $d < \sqrt{2n}$. In fact, $d$ can be any odd divisor of $n$ which is $< \sqrt{2n}$. A partition of this form has odd length and smallest element
\[a - k = \frac{n}{d} - \frac{d - 1}{2}.\]

We now consider the partitions of even length. In this case,
\[n = (a - k + 1) + (a - k + 2) + \cdots + a + (a + 1) + \cdots + (a + k).\]
This time, we have $n = (2a + 1)k$. Letting $d = 2a + 1$, we still have an odd value of $d$ but now $d > \sqrt{2n}$. The smallest element of the partition is now
\[a - k + 1 = \frac{d + 1}{2} - \frac{n}{d}.\]

Combining these arguments gives us
\begin{eqnarray*}
\sum_{\lambda \in P_c (n)} (-1)^{\# (\lambda)} s(\lambda) & = & \sum_{\substack{d | m \\ d > \sqrt{2n}}} \left(\frac{d + 1}{2} - \frac{n}{d}\right) - \sum_{\substack{d | m \\ d < \sqrt{2n}}} \left(\frac{n}{d} - \frac{d - 1}{2}\right) \\
& = & -n\sum_{d | m} \frac{1}{d} + \frac{1}{2} \sum_{d | m} d + \frac{1}{2} \left(\sum_{\substack{d | m \\ d < \sqrt{2n}}} 1 - \sum_{\substack{d | m \\ d > \sqrt{2n}}} 1\right) \\
& = & -\frac{n \sigma(m)}{m} + \frac{\sigma(m)}{2} - \frac{\#\{d | m : d < \sqrt{2n}\} - \#\{d | m : d > \sqrt{2n}\}}{2} \\
& = & -\left(2^b - \frac{1}{2}\right) \sigma(m) - \frac{\#\{d | m : d < \sqrt{2n}\} - \#\{d | m : d > \sqrt{2n}\}}{2}.
\end{eqnarray*}
Note that $\sigma(n) = (2^{b + 1} - 1) \sigma(m)$. Hence,
\[\sum_{\lambda \in P_c (n)} (-1)^{\#(\lambda)} s(\lambda) = -\frac{1}{2} (\sigma(n) + \#\{d | m : d < \sqrt{2n}\} - \# \{d | m : d > \sqrt{2n}\}). \qedhere\]
\end{proof}

\section{A bijection between partitions}

Recall that the partition $\lambda$ is \emph{almost consecutive} if $\lambda = (\lambda_1, \lambda_2, \ldots, \lambda_k)$ with $\lambda_i < \lambda_{i + 1}$ for all $i < k$ and $\lambda_{i + 1} = \lambda_i + 1$ for all $i > 1$. To do this, we find a map between certain partitions of $n$ into distinct parts and partitions of $n - 1$ into distinct parts. Both the smallest part and number of parts of a partition are invariant under this map, allowing us to evaluate the sums of the smallest parts of certain sets of partitions.

For a given integer $m$, we let $P_d (m)$ be the set of partitions of $m$ into distinct parts and $P_c (m)$ be the subset of $P_d (m)$ in which the parts are consecutive. In addition, $P_a (m)$ is the set of almost consecutive partitions of $m$.

Fix an integer $n$. We define the map $g: P_d (n + 1) \backslash P_c (n + 1) \to P_d (n)$ as follows. Let $\lambda = (\lambda_1, \lambda_2, \ldots, \lambda_k) \in P_d (n + 1) \backslash P_c (n + 1)$ with $\lambda_i < \lambda_{i + 1}$ for all $i < k$. Let $m$ be the largest integer satisfying $\lambda_m > \lambda_{m - 1} + 1$. (The variable $m$ is well-defined because $\lambda$ does not consist of consecutive parts.) To find $g(\lambda)$, simply replace $\lambda_m$ with $\lambda_m - 1$. So,
\[g(\lambda_1, \lambda_2, \ldots, \lambda_k) = (\lambda_1, \lambda_2, \ldots, \lambda_{m - 1}, \lambda_m - 1, \lambda_{m + 1}, \ldots, \lambda_k).\]
This map is well-defined because $\lambda_m - 1 > \lambda_{m - 1}$, ensuring that the parts are still distinct.

In order to use $g$, we classify the number of inverse images a partition $\lambda = (\lambda_1, \lambda_2, \ldots, \lambda_k) \in P_d (n)$ can have under $g$. First note that the singleton partition $(n)$ has no inverses because $g$ only acts on partitions with multiple elements and $g$ does not change the number of elements of a partition. However, if $\lambda$ is any \emph{other} element of $P_d (n)$, then $g^{-1} (\lambda)$ is non-empty. Specifically,
\[(\lambda_1, \lambda_2, \ldots, \lambda_{k - 1}, \lambda_k + 1) \in g^{-1} (\lambda).\]

\begin{lemma} The length and smallest part of a partition remain invariant under $g$.
\end{lemma}

\begin{proof} In order to find $g(\lambda)$, we replace $\lambda_m$ with $\lambda_m - 1$ for some $m > 1$. Because $m > 1$, $\lambda_1$ remains constant. In addition, $\lambda_m - 1 > 0$, ensuring that we do not remove any rows.
\end{proof}

\begin{lemma} If $\lambda \in P_a (n) \backslash \{(n)\}$, then $\# g^{-1} (\lambda) = 1$.
\end{lemma}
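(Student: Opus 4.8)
The plan is to show that any preimage $\mu \in g^{-1}(\lambda)$ is forced to be the single partition obtained by raising the largest part of $\lambda$ by one, namely the preimage already exhibited just before the lemma. First I would record the general shape of a preimage: since $g$ preserves length (by the previous lemma) and alters only one entry, a partition $\mu = (\mu_1, \ldots, \mu_k)$ satisfies $g(\mu) = \lambda$ exactly when $\mu$ has the same length $k$ as $\lambda$, agrees with $\lambda$ in every coordinate except one position $m > 1$, and there satisfies $\mu_m = \lambda_m + 1$; moreover $m$ must be the largest gap position of $\mu$, since that is the coordinate $g$ decrements. Thus a preimage corresponds to a choice of index $m \in \{2, \ldots, k\}$ for which raising $\lambda_m$ by one both keeps the parts distinct and places the largest gap of the result at position $m$.

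Next I would exploit the almost-consecutive hypothesis. Because $\lambda \in P_a(n) \setminus \{(n)\}$, its parts $\lambda_2, \lambda_3, \ldots, \lambda_k$ are consecutive, so $\lambda_{m+1} = \lambda_m + 1$ whenever $2 \le m < k$. Hence, if we tried to form a preimage by raising $\lambda_m$ for such an interior index, the new entry would equal $\lambda_m + 1 = \lambda_{m+1}$, colliding with the unchanged next part and destroying distinctness. Consequently no index $m$ with $2 \le m < k$ produces a partition into distinct parts, and the only surviving candidate is $m = k$.

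I would then verify that $m = k$ genuinely yields a preimage. Setting $\mu = (\lambda_1, \ldots, \lambda_{k-1}, \lambda_k + 1)$, the last two parts differ by $(\lambda_k + 1) - \lambda_{k-1} = 2$, so $\mu$ has distinct parts, lies outside $P_c(n+1)$, and has its unique (hence largest) gap at position $k$. Therefore $g$ decrements $\mu_k$, returning $\lambda$, so $\mu$ is precisely the preimage noted earlier. Combining the two steps leaves exactly one element in $g^{-1}(\lambda)$.

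The point requiring the most care is the verification that for $m = k$ the largest gap position of $\mu$ really is $k$, so that $g$ acts on the final coordinate, together with confirming that $\mu$ lies in the domain $P_d(n+1) \setminus P_c(n+1)$; both follow from the gap of size $2$ created at the end. I would also note that since $\lambda \ne (n)$ and $\lambda$ is almost consecutive we have $k \ge 2$, so the index set $\{2, \ldots, k\}$ is nonempty and the argument is not vacuous.
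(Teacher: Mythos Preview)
Your proof matches the paper's: rule out each interior index $m<k$ via the collision $\lambda_m+1=\lambda_{m+1}$, leaving only $m=k$, which gives the preimage already exhibited before the lemma. One small imprecision in your (redundant) verification step: the gap $(\lambda_k+1)-\lambda_{k-1}$ is only guaranteed to be $\ge 2$ (equality can fail when $k=2$), and when $k\ge 3$ with $\lambda_2-\lambda_1>1$ the resulting $\mu$ has a second gap at position $2$, so the gap at $k$ is not unique---but it is still the largest-\emph{indexed} gap, which is exactly what $g$ uses, so your conclusion is unaffected.
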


\begin{proof} In a given inverse of $\lambda$, we have to replace $\lambda_m$ with $\lambda_m + 1$ for some $m > 1$. Suppose $m < k$. Because $\lambda$ is almost consecutive, $\lambda_{m + 1} = \lambda_m + 1$. If we replace $\lambda_m$ with $\lambda_m + 1$, then our new partition no longer consists of distinct parts. Hence, $m$ cannot be less than $k$, which implies that the only solution is $m = k$.
\end{proof}

\begin{lemma} Let $\lambda \in P_d (n) \backslash \{(n)\}$. If $\lambda_k - \lambda_{k - 1} > 2$, then $\# g^{-1} (\lambda) = 1$.
\end{lemma}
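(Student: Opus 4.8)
The plan is to describe the whole preimage set $g^{-1}(\lambda)$ explicitly and then show that the hypothesis $\lambda_k - \lambda_{k-1} > 2$ collapses it to a single element. Since $g$ decreases exactly one part by $1$ and never alters the first part, every $\mu \in g^{-1}(\lambda)$ arises from $\lambda$ by increasing exactly one part $\lambda_j$ (with $j \geq 2$) to $\lambda_j + 1$, subject to two conditions: $\mu$ must still have distinct parts, and $j$ must be \emph{the largest} index at which $\mu$ has a gap exceeding $1$, so that $g$ acts precisely at position $j$ and returns $\lambda$. My first step is therefore to set up this correspondence between preimages of $\lambda$ and admissible positions $j$.

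Next I would confirm the preimage coming from $j = k$. Raising the top part yields $\mu = (\lambda_1, \ldots, \lambda_{k-1}, \lambda_k + 1)$, and the hypothesis $\lambda_k - \lambda_{k-1} > 2$ gives $\mu_k - \mu_{k-1} = (\lambda_k - \lambda_{k-1}) + 1 > 3 > 1$. Hence $\mu \in P_d(n+1) \setminus P_c(n+1)$, position $k$ is a gap index, and being the final index it is automatically the largest such index; so $g(\mu) = \lambda$ and $\# g^{-1}(\lambda) \geq 1$.

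The core of the argument is to exclude every $j < k$. For such a $j$, demanding that $j$ be the largest gap index of $\mu$ forces all later parts of $\mu$ to be consecutive, namely $\mu_{j+1} = \mu_j + 1$ and $\mu_{i+1} = \mu_i + 1$ for $j < i < k$. As only position $j$ was changed, this means $\lambda_{j+1} = \lambda_j + 2$ together with $\lambda_{i+1} = \lambda_i + 1$ for $j+1 \leq i \leq k-1$. Reading off the relation that involves the top two parts (the case $i = k-1$ when $j \leq k-2$, which gives $\lambda_k = \lambda_{k-1} + 1$, or the single relation $\lambda_k = \lambda_{k-1} + 2$ when $j = k-1$) always yields $\lambda_k - \lambda_{k-1} \leq 2$, contradicting the hypothesis. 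Thus no $j < k$ produces a preimage, and combined with the previous step we conclude $\# g^{-1}(\lambda) = 1$.

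The only delicate point — and hence the main obstacle — is the bookkeeping in the third step: one must keep in mind that $g$ always acts at the \emph{largest} gap index, so bumping an interior part $\lambda_j$ gives a legitimate preimage only when the entire tail of $\lambda$ beyond $j$ is already consecutive, with a gap of exactly $2$ at position $j+1$. Tracking this "consecutive tail" requirement is precisely what turns the global statement about $g$ into the purely local inequality on $\lambda_k - \lambda_{k-1}$.
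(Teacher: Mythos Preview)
Your argument is correct and follows essentially the same route as the paper: both identify the unique preimage as the one obtained by raising $\lambda_k$, and both rule out bumping any earlier part. The paper is slightly more direct: instead of your tail-consecutiveness case analysis, it simply notes that no matter which part of $\lambda$ is increased by $1$, the resulting $\pi$ satisfies $\pi_k - \pi_{k-1} \geq \lambda_k - \lambda_{k-1} - 1 > 1$, so $k$ is already the largest gap index of $\pi$ and $g$ must act there.
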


\begin{proof} Once again, we suppose that $(\lambda_1, \lambda_2, \ldots, \lambda_{m - 1}, \lambda_m + 1, \lambda_{m + 1}, \ldots, \lambda_k) \in g^{-1} (\lambda)$ and show that $m$ must equal $k$. By assumption, $m > 1$. Suppose $g(\pi) = \lambda$. Because $\lambda_k - \lambda_{k - 1} > 2$, we have $\pi_k - \pi_{k - 1} > 1$. So, when we apply $g$ to $\pi$, we must replace $\pi_k$ with $\pi_k - 1$. Therefore, $m = k$.
\end{proof}

\begin{lemma} Let $\lambda \in P_d (n) \backslash \{(n)\}$ be a partition not discussed in the previous two lemmata. Then, $\# g^{-1} (\lambda) = 2$.
\end{lemma}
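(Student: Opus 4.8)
The plan is to determine $g^{-1}(\lambda)$ explicitly for $\lambda = (\lambda_1, \ldots, \lambda_k) \in P_d(n) \backslash \{(n)\}$. Since $g$ lowers a single part, at an index exceeding $1$, by one and fixes the rest, every preimage of $\lambda$ arises by raising exactly one part $\lambda_j$ with $2 \le j \le k$ to $\lambda_j + 1$; denote this candidate by $\pi^{(j)}$. The excerpt already records that $\pi^{(k)} = (\lambda_1, \ldots, \lambda_{k-1}, \lambda_k + 1)$ always lies in $g^{-1}(\lambda)$, so it suffices to show that exactly one candidate $\pi^{(j)}$ with $j < k$ is a genuine preimage.

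For such a $\pi^{(j)}$ to qualify it must lie in $P_d(n+1) \backslash P_c(n+1)$ and satisfy $g(\pi^{(j)}) = \lambda$. Distinctness forces $\lambda_{j+1} - \lambda_j \ge 2$, while $g(\pi^{(j)}) = \lambda$ forces $j$ to be the largest index at which $\pi^{(j)}$ has a gap. Comparing the successive differences of $\pi^{(j)}$ with those of $\lambda$, I would note that raising $\lambda_j$ leaves every difference beyond index $j$ unchanged except the one at $j+1$, which drops by one (while the difference at $j$ itself rises, producing the gap that $g$ will target). Hence no gap can survive above $j$ precisely when $\lambda_{j+1} - \lambda_j = 2$ and the tail $\lambda_{j+1}, \ldots, \lambda_k$ is consecutive, and in that case $g$ indeed returns $\lambda$. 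So a second preimage exists exactly when $\lambda$ terminates in a consecutive block preceded by a gap of size two, and then $j$ is forced to be the index immediately before that gap.

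Uniqueness of this $j$ is then immediate: two candidates $j_1 < j_2$ would each demand a consecutive tail beginning just past $j_1$, yet the size-two gap at $j_2 + 1$ breaks consecutiveness there. Together with $\pi^{(k)}$, which is distinct from $\pi^{(j)}$ since $j < k$, this yields exactly two preimages. To connect the criterion with the hypotheses, I would argue that a partition left over by the two previous lemmata is not almost consecutive, so it has a gap at some index $\ge 3$; the last such gap then sits at a position $p \ge 3$ with a consecutive tail beyond it, and one takes $j = p - 1 \ge 2$.

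The step I expect to be the main obstacle is controlling the size of this terminal gap. The criterion above needs the gap preceding the consecutive tail to have size exactly two, whereas the hypothesis $\lambda_k - \lambda_{k-1} \le 2$ only pins this down when the last gap occurs at position $k$. When instead the last gap lies strictly inside, preceding a consecutive tail of length $\ge 2$, the final difference $\lambda_k - \lambda_{k-1}$ equals $1$ and carries no information about that interior gap, so one must argue separately that it cannot exceed two. Establishing this size-two condition on the terminal gap is, I expect, the crux of the proof, and it is the point at which the structural hypotheses inherited from the earlier lemmata must be invoked with the most care.
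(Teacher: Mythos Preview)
Your analysis of the preimages is correct, and the obstacle you isolate in the final paragraph is real and fatal: the hypotheses do \emph{not} force the terminal gap to have size two. Take $\lambda = (1,2,5,6) \in P_d(14)$; it is not almost consecutive (since $\lambda_3 - \lambda_2 = 3$) and has $\lambda_4 - \lambda_3 = 1 \le 2$, so it falls under the present lemma. But its only preimage is $\pi^{(4)} = (1,2,5,7)$: the candidate $\pi^{(3)} = (1,2,6,6)$ repeats a part, and $\pi^{(2)} = (1,3,5,6)$ still has its last gap at index $3$, so $g(\pi^{(2)}) = (1,3,4,6) \ne \lambda$. Hence $\#g^{-1}(\lambda) = 1$, and the lemma as stated is false.

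The paper's own proof makes exactly the slip you anticipated. In the case $\lambda_k - \lambda_{k-1} = 1$ it lets $m$ be the last gap index of $\lambda$ and asserts that $g(\lambda_1, \ldots, \lambda_{m-1} + 1, \lambda_m, \ldots, \lambda_k) = \lambda$ without verifying that $\lambda_m - \lambda_{m-1} = 2$; when that gap exceeds two the purported preimage still has its last gap at index $m$, not $m-1$, and $g$ sends it elsewhere. So the step you singled out as the crux cannot be completed, and neither your outline nor the paper's argument can be repaired without amending the statement.
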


\begin{proof} There are two cases based on the size of $\lambda_k - \lambda_{k - 1}$. First, suppose that $\lambda_k - \lambda_{k - 1} = 1$. We already know that $(\lambda_1, \lambda_2, \ldots, \lambda_{k - 1}, \lambda_k + 1) \in g^{-1} (\lambda)$. Additionally, the largest value of $m$ satisfying $\lambda_m - \lambda_{m - 1} > 1$ lies in $[3, k - 1]$. So,
\[g(\lambda_1, \lambda_2, \ldots, \lambda_{m - 1} + 1, \lambda_m, \ldots, \lambda_k) = \lambda.\]
In addition, if we apply $g$ to any inverse of $\lambda$, then we can only change the $r$th row if $r \geq m - 1$. Thus, $\# g^{-1} (\lambda) = 2$.

Now suppose $\lambda_k - \lambda_{k - 1} = 2$. The only possible inverses involve increasing $\lambda_k$ or $\lambda_{k - 1}$ by $1$.
\end{proof}

\section{Applying the bijection}

Now that we have established several notable properties of $g$, we can use them to prove Theorem $6$. We split $P_d (n) \backslash \{(n)\}$ into several subsets. Let $\mathcal{S}_1$ be the set of almost consecutive partitions with length greater than $1$ for which $\lambda_k - \lambda_{k - 1} \leq 2$. Let $\mathcal{S}_2$ be the set of all $\lambda = (\lambda_1, \lambda_2, \ldots, \lambda_k) \in P_d (n) \backslash \{(n)\}$ with $\lambda_k - \lambda_{k - 1} > 2$. Finally, let $\mathcal{S}_3$ be all other partitions.

By definition,
\[\sum_{\lambda \in P_d (n)} f(\lambda) = \sum_{\lambda \in \mathcal{S}_1} f(\lambda) + \sum_{\lambda \in \mathcal{S}_2} f(\lambda) + \sum_{\lambda \in \mathcal{S}_3} f(\lambda) + f((n)).\]
In addition, we can rewrite the $\mathcal{S}_2$ sum. Define the map $h: \mathcal{S}_2 \to P_d (n - 2) \backslash \{(n - 2)\}$ as
\[h(\lambda_1, \lambda_2, \ldots, \lambda_k) = (\lambda_1, \lambda_2, \ldots, \lambda_{k - 1}, \lambda_k - 2).\]
Then, $h$ is a bijection which preserves the length and smallest part of a partition. So,
\[\sum_{\lambda \in \mathcal{S}_2} f(\lambda) = \sum_{\lambda \in P_d (n - 2) \backslash \{(n - 2)\}} f(\lambda) = \sum_{\lambda \in P_d (n - 2)} f(\lambda) - f((n - 2)).\]
Plugging this back into our sum over all $\lambda \in P_d (n)$ gives us
\[\sum_{\lambda \in \mathcal{S}_1} f(\lambda) + \sum_{\lambda \in \mathcal{S}_3} f(\lambda) = \sum_{\lambda \in P_d (n)} f(\lambda) - \sum_{\lambda \in P_d (n - 2)} f(\lambda) - f((n)) + f((n - 2)).\]

We now relate these quantities to subsets of $P_d (n + 1)$. By the lemmata in the previous section, the elements of $\mathcal{S}_1$ and $\mathcal{S}_2$ have one inverse under $g$, while the elements of $\mathcal{S}_3$ have two. In addition, these inverses have the same number of parts and smallest part. Therefore,
\begin{eqnarray*}
\sum_{\lambda \in P_d (n + 1)} f(\lambda) & = & \sum_{\lambda \in g^{-1} (\mathcal{S}_1)} f(\lambda) + \sum_{\lambda \in g^{-1} (\mathcal{S}_2)} f(\lambda) + \sum_{\lambda \in g^{-1} (\mathcal{S}_3)} f(\lambda) + \sum_{\lambda \in P_c (n + 1)} f(\lambda) \\
& = & \sum_{\lambda \in \mathcal{S}_1} f(\lambda) + \sum_{\lambda \in \mathcal{S}_2} f(\lambda) + 2 \sum_{\lambda \in \mathcal{S}_3} f(\lambda) + \sum_{\lambda \in P_c (n + 1)} f(\lambda).
\end{eqnarray*}
Using our previous results, we can evaluate a few of these sums. We already established that
\[\sum_{\lambda \in \mathcal{S}_2} f(\lambda) = \sum_{\lambda \in P_d (n - 2)} f(\lambda) - f((n - 2)),\]
\[\sum_{\lambda \in \mathcal{S}_3} f(\lambda) = \sum_{\lambda \in P_d (n)} f(\lambda) - \sum_{\lambda \in P_d (n - 2)} f(\lambda) - \sum_{\lambda \in \mathcal{S}_1} f(\lambda) - f((n)) + f((n - 2)).\]
Plugging these formulas back into the sum over all $\lambda \in P_d (n + 1)$ gives us
\begin{eqnarray*}
\sum_{\lambda \in P_d (n + 1)} f(\lambda) & = & \sum_{\lambda \in \mathcal{S}_1} f(\lambda) + \sum_{\lambda \in P_d (n - 2)} f(\lambda) - f((n - 2)) \\
& & + 2\left(\sum_{\lambda \in P_d (n)} f(\lambda) - \sum_{\lambda \in P_d (n - 2)} f(\lambda) - \sum_{\lambda \in \mathcal{S}_1} f(\lambda) - f((n)) + f((n - 2))\right) \\
& & + \sum_{\lambda \in P_c (n + 1)} f(\lambda) \\
& = & 2 \sum_{\lambda \in P_d (n)} f(\lambda) + \sum_{\lambda \in P_c (n + 1)} f(\lambda) -\sum_{\lambda \in \mathcal{S}_1} f(\lambda) - \sum_{\lambda \in P_d (n - 2)} f(\lambda) \\
& & + f((n - 2)) - 2f((n)),
\end{eqnarray*}
which implies that
\begin{eqnarray*}
\sum_{\lambda \in \mathcal{S}_1} f(\lambda) & = & 2\sum_{\lambda \in P_d (n)} f(\lambda) + \sum_{\lambda \in P_c (n + 1)} f(\lambda) - \sum_{\lambda \in P_d (n + 1)} f(\lambda) - \sum_{\lambda \in P_d (n - 2)} f(\lambda) \\
& & + f((n - 2)) - 2f((n)).
\end{eqnarray*}

Our original goal was to sum $f(\lambda)$ over all almost consecutive $\lambda$ with sum $n$. By definition,
\[\mathcal{S}_1 = \{|\lambda| = n : \lambda \textrm{ almost consecutive}, L(\lambda) = k > 1, \lambda_k - \lambda_{k - 1} \leq 2\}.\]
We now classify the almost consecutive partitions of $n$ which do not belong to $\mathcal{S}_1$. If $k > 2$, then $\lambda_k - \lambda_{k - 1} = 1$ because $\lambda_2, \lambda_3, \ldots, \lambda_k$ are all consecutive. The only almost consecutive partitions of $n$ which do not belong to $\mathcal{S}_1$ are $((n))$ and the partitions of the form $(i, n - i)$ with $n - 2i > 2$. In addition, if $n - 2i > 2$, then $i < (n/2) - 1$. Hence,
\[\sum_{\lambda \in P_a (n)} f(\lambda) = \sum_{\lambda \in \mathcal{S}_1} f(\lambda) + f((n)) + \sum_{i = 1}^{\lfloor (n - 3)/2 \rfloor} f((i, n - i)).\]
Substituting our formula for the sum of $f(\lambda)$ over all $\lambda \in \mathcal{S}_1$ into the previous formula gives us
\begin{eqnarray*}
\sum_{\lambda \in P_a (n)} f(\lambda) & = & 2\sum_{\lambda \in P_d (n)} f(\lambda) + \sum_{\lambda \in P_c (n + 1)} f(\lambda) - \sum_{\lambda \in P_d (n + 1)} f(\lambda) - \sum_{\lambda \in P_d (n - 2)} f(\lambda) \\
& & + \sum_{i = 1}^{\lfloor (n - 3)/2 \rfloor} f((i, n - i)) + f((n - 2)) - f((n)).
\end{eqnarray*}

\section{Special cases}

In this section, we apply Theorem $6$ to specific functions $f$. For each of these functions, we can sum $f(\lambda)$ over all $\lambda \in P_d (n)$ and $\lambda \in P_c (n)$, allowing us to find the corresponding sum for $\lambda \in P_a (n)$ as well. Recall that $p_a (n) = \# P_a (n)$ and $p_d (n) = \# P_d (n)$.

\begin{thm} For all $n \geq 3$, we have
\[p_a (n) = 2p_d (n) - p_d (n + 1) - p_d (n - 2) + \#\{d | n + 1 : d \textrm{ odd}\} + \left \lfloor \frac{n - 3}{2} \right \rfloor.\]
\end{thm}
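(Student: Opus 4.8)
The plan is to apply Theorem \ref{main theorem} to the constant function $f(\lambda) = 1$ and then read off each of the resulting sums as a cardinality. First I would check that $f \equiv 1$ is admissible: taking $F \equiv 1$ in the hypothesis gives $f(\lambda) = F(\#(\lambda), s(\lambda))$ for every $\lambda$ with distinct parts, so $f$ depends (trivially) only on the length and smallest part. With this choice, summing $f$ over any finite set of partitions simply counts that set.

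Next I would substitute into the identity of Theorem \ref{main theorem} and translate term by term. The left-hand side becomes $\sum_{\lambda \in P_a(n)} 1 = p_a(n)$, while the three sums over $P_d$ become $2 p_d(n)$, $-p_d(n+1)$, and $-p_d(n-2)$. The finite sum collapses to $\sum_{i=1}^{\lfloor (n-3)/2 \rfloor} 1 = \lfloor (n-3)/2 \rfloor$, and the two remaining terms are $f((n-2)) = 1$ and $f((n)) = 1$. The only sum that is not already a partition count is $\sum_{\lambda \in P_c(n+1)} 1 = \# P_c(n+1)$, the number of partitions of $n+1$ into distinct consecutive parts.

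To evaluate this last quantity I would invoke Sylvester's theorem (Theorem 4), equivalently the statement $\# P_c(m) = \#\{d \mid m : d \text{ odd}\}$ recalled at the start of Section 2, applied with $m = n+1$. Collecting everything gives
\[p_a(n) = 2p_d(n) + \#\{d \mid n+1 : d \text{ odd}\} - p_d(n+1) - p_d(n-2) + \left\lfloor \frac{n-3}{2} \right\rfloor + f((n-2)) - f((n)).\]
Since $f((n-2)) - f((n)) = 1 - 1 = 0$, these two trailing terms cancel and the asserted formula drops out. The proof is therefore a direct specialization with no genuine obstacle: the only external input is Sylvester's count of consecutive partitions, and the only subtlety is noticing that the $f((n-2))$ and $f((n))$ contributions cancel rather than leaving a stray constant.
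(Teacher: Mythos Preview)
Your proposal is correct and follows exactly the paper's own approach: specialize Theorem~\ref{main theorem} to $f\equiv 1$, use Sylvester's theorem to identify $\#P_c(n+1)$ with the number of odd divisors of $n+1$, and observe that the $f((n-2))-f((n))$ terms cancel.
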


\begin{proof} To obtain this result, we simply use the function $f(\lambda) = 1$. In addition, we recall that $\# P_c (n)$ is simply the number of odd divisors of $n$.
\end{proof}

\begin{thm} We have
\begin{eqnarray*}
\sum_{\lambda \in P_a (n)} (-1)^{\# (\lambda)} & = & 2h(n) - h(n + 1) - h(n - 2) - \# \{d | n + 1 : d < \sqrt{2(n + 1)}, d \textrm{ odd}\} \\
& & + \# \{d | n + 1 : d > \sqrt{2(n + 1)}, d \textrm{ odd}\} + \left\lfloor \frac{n - 3}{2} \right\rfloor,
\end{eqnarray*}
with
\[h(n) = \left\{\begin{array}{ll}
(-1)^k, & \textrm{if } n = k(3k - 1)/2, \\
0, & \textrm{otherwise}.
\end{array}\right.\]
\end{thm}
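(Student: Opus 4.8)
The plan is to apply Theorem~6 directly with the choice $f(\lambda) = (-1)^{\#(\lambda)}$, which clearly depends only on the length of $\lambda$, and then evaluate each of the resulting sums. The master formula gives
\[\sum_{\lambda \in P_a(n)} (-1)^{\#(\lambda)} = 2\sum_{\lambda \in P_d(n)} (-1)^{\#(\lambda)} + \sum_{\lambda \in P_c(n+1)} (-1)^{\#(\lambda)} - \sum_{\lambda \in P_d(n+1)} (-1)^{\#(\lambda)} - \sum_{\lambda \in P_d(n-2)} (-1)^{\#(\lambda)} + \sum_{i=1}^{\lfloor (n-3)/2 \rfloor} (-1)^{\#((i,\,n-i))} + (-1)^{\#((n-2))} - (-1)^{\#((n))}.\]
The first task is to identify each piece. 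For any $m$, the sum $\sum_{\lambda \in P_d(m)} (-1)^{\#(\lambda)}$ is exactly $p_e(m) - p_o(m)$, which by Euler's Pentagonal Number Theorem (Theorem~1 in the excerpt) equals $h(m)$; this instantly accounts for the three terms $2h(n) - h(n+1) - h(n-2)$. The two singleton corrections $(-1)^{\#((n-2))}$ and $(-1)^{\#((n))}$ are each $(-1)^1 = -1$, so they cancel. The partitions $(i, n-i)$ each have length $2$, so $(-1)^{\#((i,n-i))} = 1$ and that sum contributes exactly $\lfloor (n-3)/2 \rfloor$, matching the final term in the claim.

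The remaining ingredient is the consecutive sum $\sum_{\lambda \in P_c(n+1)} (-1)^{\#(\lambda)}$, and this is where the bulk of the work lies. I would invoke Mason's refinement (Theorem~7 in the excerpt) applied to $n+1$: writing $m'$ for the largest odd divisor of $n+1$, the partitions of $n+1$ into an odd number of consecutive parts correspond to divisors of $m'$ below $\sqrt{2(n+1)}$, and those into an even number correspond to divisors above $\sqrt{2(n+1)}$. Since the odd divisors of $n+1$ are precisely the divisors of its largest odd part $m'$, the signed count
\[\sum_{\lambda \in P_c(n+1)} (-1)^{\#(\lambda)} = \#\{d \mid m' : d > \sqrt{2(n+1)}\} - \#\{d \mid m' : d < \sqrt{2(n+1)}\}\]
equals $-\#\{d \mid n+1 : d < \sqrt{2(n+1)},\ d \text{ odd}\} + \#\{d \mid n+1 : d > \sqrt{2(n+1)},\ d \text{ odd}\}$, which is exactly the two divisor-counting terms in the statement.

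The main obstacle, and the only genuinely delicate point, is the parity bookkeeping in the consecutive sum: one must be careful that an even number of consecutive parts produces the $+1$ sign and an odd number the $-1$ sign, and that the $\sqrt{2(n+1)}$ threshold pairs with the correct parity. I would double-check this against the sign convention already verified in Theorem~9, where the same classification yielded the smallest-part identity; there the even-length partitions were shown to come from $d > \sqrt{2n}$ and odd-length from $d < \sqrt{2n}$, so the orientation here is forced and consistent. Assembling all pieces then yields the claimed formula directly.
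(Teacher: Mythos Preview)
Your proposal is correct and follows exactly the paper's own approach: apply Theorem~6 with $f(\lambda)=(-1)^{\#(\lambda)}$, evaluate the $P_d$ sums via Euler's Pentagonal Number Theorem to obtain the $h$-terms, and evaluate the $P_c(n+1)$ sum via Mason's refinement (Theorem~8) to obtain the two odd-divisor counts, with the singleton corrections cancelling and the length-two partitions contributing $\lfloor (n-3)/2\rfloor$. The paper's proof is in fact terser than yours, citing only the Pentagonal Number Theorem and Theorem~8 without spelling out the cancellation or the parity check you carefully note.
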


\begin{proof} This is simply the $f(\lambda) = (-1)^{\# (\lambda)}$ case of Theorem $5$. For the sums over the sets of distinct parts, we apply Euler's Pentagonal Number Theorem. For the sums over consecutive parts, we apply Theorem $8$.
\end{proof}

Note that this quantity is asymptotic to $n/2$ as $n \to \infty$.

\begin{thm} We have
\begin{eqnarray*}
\sum_{\lambda \in P_a (n)} (-1)^{\# (\lambda)} s(\lambda) & = & d(n + 1) + d(n + 2) - 2d(n) + T\left(\left\lfloor \frac{n - 3}{2} \right\rfloor\right) + 2 \\
& & -\frac{1}{2} (\sigma(n + 1) + \#\{d | n + 1 : d < \sqrt{2(n + 1)}, d \textrm{ odd}\} \\
& & - \# \{d | n + 1 : d > \sqrt{2(n + 1)}, d \textrm{ odd}\}).
\end{eqnarray*}
\end{thm}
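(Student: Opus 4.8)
The plan is to apply Theorem~$6$ with the choice $f(\lambda) = (-1)^{\#(\lambda)} s(\lambda)$ and then evaluate each of the seven resulting terms separately. This is exactly the function named in the introduction as giving ``an analogue of Theorem~$5$ for almost consecutive partitions,'' so the whole content is a matter of identifying the five closed-form ingredients that plug into the master formula
\[
\sum_{\lambda \in P_a(n)} f(\lambda) = 2\sum_{\lambda \in P_d(n)} f(\lambda) + \sum_{\lambda \in P_c(n+1)} f(\lambda) - \sum_{\lambda \in P_d(n+1)} f(\lambda) - \sum_{\lambda \in P_d(n-2)} f(\lambda) + \sum_{i=1}^{\lfloor (n-3)/2\rfloor} f((i,n-i)) + f((n-2)) - f((n)).
\]

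First I would handle the three sums over $P_d$. For $f(\lambda) = (-1)^{\#(\lambda)} s(\lambda)$ these are exactly $sp_o(N) - sp_e(N)$ up to sign: summing $(-1)^{\#(\lambda)} s(\lambda)$ gives $\sum_\lambda (-1)^{\#(\lambda)} s(\lambda) = sp_e(N) - sp_o(N) = -d(N)$ by Theorem~$3$ (Uchimura's identity). Hence the $P_d(n)$, $P_d(n+1)$, and $P_d(n-2)$ sums contribute $-d(n)$, $-d(n+1)$, and $-d(n-2)$ respectively, so $2\sum_{P_d(n)} - \sum_{P_d(n+1)} - \sum_{P_d(n-2)}$ becomes $-2d(n) + d(n+1) + d(n-2)$. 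One should double-check the edge indices (i.e.\ that $N = n-2 \geq 1$), which is covered by the hypothesis $n \geq 3$. Next, the $P_c(n+1)$ sum is precisely the quantity computed in Theorem~$8$: it equals $-\tfrac12(\sigma(n+1) + \#\{d\mid m' : d < \sqrt{2(n+1)}\} - \#\{d\mid m' : d > \sqrt{2(n+1)}\})$, where $m'$ is the largest odd divisor of $n+1$; since every divisor of $m'$ is automatically odd, this matches the ``$d$ odd'' conditions in the statement.

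The remaining pieces are the explicit two-part and one-part terms. The singleton contributions are immediate: $f((n)) = (-1)^1 \cdot n = -n$ and $f((n-2)) = -(n-2)$, so $f((n-2)) - f((n)) = 2$, accounting for the ``$+2$'' in the claimed formula. For the middle sum, each $(i, n-i)$ has length $2$ and smallest part $i$, so $f((i,n-i)) = (-1)^2 i = i$, whence $\sum_{i=1}^{\lfloor (n-3)/2\rfloor} f((i,n-i)) = \sum_{i=1}^{\lfloor (n-3)/2\rfloor} i = T(\lfloor (n-3)/2\rfloor)$, the triangular number appearing in the statement. Assembling all seven contributions and grouping the divisor terms as $d(n+1) + d(n-2) - 2d(n)$ then yields the displayed identity.

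The one genuine subtlety, and the step I expect to need the most care, is the discrepancy between the $d(n-2)$ I obtain from Uchimura's identity and the $d(n+2)$ that actually appears in the theorem statement. This signals that the bookkeeping is not quite as mechanical as it first looks: either there is an index shift hidden in how Theorem~$6$'s $n-2$ term interacts with the consecutive-partition count, or the sum over $P_c(n+1)$ secretly contributes an extra divisor term that recombines $d(n-2)$ into $d(n+2)$ via the relation between $\sigma$ and divisor counts. The main obstacle will therefore be to track these divisor-function indices exactly and verify the recombination, rather than any conceptual difficulty; once the arithmetic of the indices is pinned down against a small numerical check (say $n = 5$ or $n = 6$), the identity follows by direct substitution into Theorem~$6$.
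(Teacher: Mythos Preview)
Your approach is exactly the one the paper intends: plug $f(\lambda)=(-1)^{\#(\lambda)}s(\lambda)$ into Theorem~6, use Uchimura's identity (Theorem~3) for the $P_d$ sums, use Theorem~9 for the $P_c(n+1)$ sum, and compute the explicit one- and two-part terms directly. Every one of your evaluations is correct, including the triangular number and the ``$+2$''.

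The discrepancy you flag at the end is not a subtlety to be resolved---it is a typo in the paper. Your derivation gives $d(n-2)$, not $d(n+2)$, and $d(n-2)$ is what the formula should say. A numerical check settles it: for $n=6$ one has $P_a(6)=\{6,\,5+1,\,4+2,\,3+2+1\}$ and $\sum_{\lambda\in P_a(6)}(-1)^{\#(\lambda)}s(\lambda)=-6+1+2-1=-4$; your formula (with $d(4)=3$) gives $2+3-8+1+2-4=-4$, while the printed version (with $d(8)=4$) gives $-3$. The case $n=7$ behaves the same way. There is no hidden index shift or recombination between $\sigma(n+1)$ and the divisor counts; Theorem~6 feeds you $P_d(n-2)$ and Uchimura turns that into $-d(n-2)$, full stop. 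So drop the final hedging paragraph and state the result with $d(n-2)$.
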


Using Theorem $13$, we can obtain an asymptotic formula for the number of partitions of $n$ into almost consecutive parts.

\begin{thm} We have
\[p_a (n) \sim \frac{\pi}{(8 \cdot 3^{3/4}) n^{5/4}} \exp\left(\pi \sqrt{\frac{n}{3}}\right).\]
\end{thm}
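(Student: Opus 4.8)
The plan is to start from the exact formula of Theorem 11,
\[p_a(n) = 2p_d(n) - p_d(n+1) - p_d(n-2) + \#\{d \mid n+1 : d \text{ odd}\} + \left\lfloor \frac{n-3}{2}\right\rfloor,\]
and to observe that only the three partition-counting terms matter asymptotically. The divisor term is $O(n^{\varepsilon})$ and the floor term is $O(n)$, both of which are negligible next to the super-polynomially large $p_d(n)$. Hence $p_a(n) \sim 2p_d(n) - p_d(n+1) - p_d(n-2)$, and the whole problem reduces to estimating this finite combination of nearby values of $p_d$.

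Write $M(x) = \frac{1}{4\cdot 3^{1/4}x^{3/4}}\exp(\pi\sqrt{x/3})$ for the Hardy--Ramanujan main term and $\phi(x) = \pi\sqrt{x/3}$. The key observation is that $2M(n) - M(n+1) - M(n-2)$ is a finite difference whose \emph{constant} term cancels: Taylor expanding about $x=n$, the coefficient of $M(n)$ is $2-1-1 = 0$, the coefficient of $M'(n)$ is $-(1) - (-2) = 1$, and the coefficient of $M''(n)$ is $-\tfrac12 - 2 = -\tfrac52$. Thus the leading surviving contribution is $M'(n)$, and since $M'(x)$ is dominated by the derivative of the exponential, $M'(n) \sim M(n)\phi'(n) = M(n)\cdot \frac{\pi}{2\sqrt{3}\sqrt n}$. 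Multiplying the constants, $\frac{1}{4\cdot 3^{1/4}}\cdot\frac{\pi}{2\sqrt 3} = \frac{\pi}{8\cdot 3^{3/4}}$, and the powers, $n^{-3/4}\cdot n^{-1/2} = n^{-5/4}$, reproduces exactly the claimed answer. The next term $M''(n) \sim M(n)\phi'(n)^2 \sim n^{-1}M'(n)$ is of lower order and does not contribute.

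The main obstacle is making this finite-difference heuristic rigorous. Because the constant term of the difference cancels, the genuine main term has size $n^{-1/2}M(n)$, which is smaller than $M(n)$ by a factor $n^{-1/2}$; consequently the bare statement $p_d(n) \sim M(n)$ is \emph{insufficient}, since the implied $o(M(n))$ errors in the three summands need not cancel and could swamp the true main term. To repair this I would invoke a one-term-sharper asymptotic $p_d(n) = M(n)\bigl(1 + a_1 n^{-1/2} + O(n^{-1})\bigr)$, obtained from a saddle-point (circle-method) analysis of $\prod_{k\ge 1}(1+q^k)$. With this in hand the argument closes: the leading ``$1$'' gives $2M(n) - M(n+1) - M(n-2) \sim M'(n) \sim \frac{\pi}{8\cdot 3^{3/4}}n^{-5/4}\exp(\pi\sqrt{n/3})$; the $a_1 n^{-1/2}$ correction, being itself finite-differenced, loses a further factor $n^{-1/2}$ and enters only at order $n^{-1}M(n)$; and the $O(n^{-1})$ remainder, bounded termwise, is likewise $O(n^{-1}M(n)) = o(n^{-1/2}M(n))$. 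Assembling these estimates yields $p_a(n) \sim \frac{\pi}{8\cdot 3^{3/4}n^{5/4}}\exp(\pi\sqrt{n/3})$, as claimed.
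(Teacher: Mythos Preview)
Your proposal is correct and matches the paper's approach: both reduce via the exact formula to $2p_d(n)-p_d(n+1)-p_d(n-2)+O(n)$, feed in a Hardy--Ramanujan asymptotic for $p_d$ with a quantitative error term, and extract the surviving $M'(n)$ contribution after the leading $M(n)$ terms cancel. Your explicit observation that the bare relation $p_d(n)\sim M(n)$ is too weak, and that one needs an error of size $O(M(n)/n)$ to survive the finite difference, is if anything more careful than the paper's treatment, which simply quotes $p_d(n)=(1+O(1/n))M(n)$ from \cite{HR} and expands $M(n+1)$, $M(n-2)$ directly.
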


\begin{proof} Theorem $13$ implies that
\[p_a (n) = 2p_d (n) - p_d (n + 1) - p_d (n - 2) + O(n).\]
A classic theorem of Hardy and Ramanujan \cite{HR} states that
\[p_d (n) = \left(1 + O\left(\frac{1}{n}\right)\right) \frac{1}{4 \sqrt[4]{3} n^{3/4}} \exp\left(\pi \sqrt{\frac{n}{3}}\right).\]
For a pair of real numbers $C, k$, we may observe that
\[(n + k)^{C} = n^C (1 + (1 + o(1))(k/n))^C = n^C (1 + (C + o(1)) (k/n)),\]
\[e^{C\sqrt{n + k}} = e^{C\sqrt{n}} e^{(C + o(1)) \sqrt{n} (k/(2n))} = e^{C\sqrt{n}} e^{(C/2 + o(1)) k/\sqrt{n}} = e^{C \sqrt{n}} \left(1 + \left(\frac{C}{2} + o(1)\right) \frac{k}{\sqrt{n}}\right),\]
as $n \to \infty$. Hence,
\begin{eqnarray*}
p_d (n + 1) & = & \frac{1}{4 \sqrt[4]{3} n^{3/4}} \exp\left(\pi \sqrt{\frac{n}{3}}\right) + (1 + o(1)) \frac{\pi}{8 \cdot 3^{3/4} n^{5/4}} \exp\left(\pi \sqrt{\frac{n}{3}}\right), \\
p_d (n - 2) & = & \frac{1}{4 \sqrt[4]{3} n^{3/4}} \exp\left(\pi \sqrt{\frac{n}{3}}\right) - (1 + o(1)) \frac{\pi}{4 \cdot 3^{3/4} n^{5/4}} \exp\left(\pi \sqrt{\frac{n}{3}}\right),
\end{eqnarray*}
Putting everything together gives us our desired bound.
\end{proof}

\section{Another partition equivalence}

Finally, we prove Theorem \ref{final theorem}. Suppose we write $n$ as a sum of $\ell + 1$ terms in which the last $\ell$ terms are consecutive. Let the smallest term be $k$ and the gap between the smallest two terms $m$. Then,
\[n = k + ((k + m) + (k + m + 1) + \cdots + (k + m + \ell - 1)) = k(\ell + 1) + m\ell + (\ell(\ell - 1)/2).\]
The corresponding generating function is
\[\sum_{k, \ell, m \geq 1} q^{k(\ell + 1) + m\ell + (\ell(\ell - 1)/2)} = \sum_{\ell = 1}^\infty q^{\ell(\ell - 1)/2} \left(\sum_{k = 1}^\infty q^{k(\ell + 1)} \sum_{m = 1}^\infty q^{m\ell}\right).\]
The last two sums are both geometric series, making them straightforward to evaluate. Our sum is now
\[\sum_{\ell = 1}^\infty q^{\ell(\ell - 1)/2} \cdot \frac{q^{\ell + 1}}{1 - q^{\ell + 1}} \cdot \frac{q^\ell}{1 - q^\ell} = \sum_{\ell = 1}^\infty \frac{q^{(\ell + 1)(\ell + 2)/2}}{(1 - q^\ell)(1 - q^{\ell + 1})}.\]

We now discuss a combinatorial interpretation of this quantity. Letting $r = \ell + 1$ allows us to rewrite the generating function as
\[\sum_{r = 2}^\infty q^{1 + 2 + \cdots + r} (1 + q^{r - 1} + q^{2(r - 1)} + \cdots)(1 + q^r + q^{2r} + \cdots).\]
This sum counts partitions of a number $n$ into numbers $1, 2, \ldots, r$ with $r \geq 2$ in which $1, 2, \ldots, r - 2$ occur exactly once and $r - 1$ and $r$ simply need to occur at least once.

\section{Competing Interests and Data Statements}

The authors did not receive support from any organization for the submitted work.

The authors have no data sets to make available.


\begin{thebibliography}{99}
\bibitem{ABEM} A. Agarwal, S. C. Bhoria, P. Eyyunni, and B. Maji, Bressoud-Subbarao-type weighted partition identities for the generalized divisor function, Ann. Comb. (2023), 1--20.
\bibitem{And1} G. E. Andrews, \emph{The Theory of Partitions}, Cambridge University Press, Cambridge, UK, 1984.
\bibitem{And2} G. E. Andrews, The number of smallest parts in a partition of $n$, J. Reine Angew. Math. {\bf 624} (2008), 133--142.
\bibitem{Be} B. C. Berndt, \emph{Ramanujan's Notebooks Part IV}, Springer-Verlag, New York, NY, 1994.
\bibitem{FFW} R. Fokkink, W. Fokkink, and Z. B. Wang, A relation between partitions and the number of divisors, Amer. Math. Monthly {\bf 102:4} (1995), 345--347.
\bibitem{GLV} R. Gupta, N. Lebowitz-Lockard, and J. Vandehey, On the $k$th smallest part of a partition into distinct parts, submitted for publication.
\bibitem{HR} G. H. Hardy and S. Ramanujan, Asymptotic formul{\ae} in combinatory analysis, Proc. Lond. Math. Soc. (2) {\bf 17} (1918), 75--115.
\bibitem{HH} M. D. Hirschhorn and P. M. Hirschhorn, Partitions into consecutive parts, Math. Mag. {\bf 78:5} (2005), 396--397.
\bibitem{Klu} J. C. Kluyver, Vraagstuk XXXVII (Solution by S. C. van Veen), Wiskundige Opgaven (1919), 92--93.
\bibitem{Mas} T. E. Mason, On the representation of an integer as the sum of consecutive integers, Amer. Math. Monthly {\bf 19:3} (1912), 46--50.
\bibitem{Ram} S. Ramanujan, \emph{Notebooks of Srinivasa Ramanujan, Vol. II}, Tata Institute of Fundamental Research, Mumbai, India, 2012.
\bibitem{Syl} J. J. Sylvester, A constructive theory of partitions, arranged in three acts, an interact and an exodion, Amer. J. Math. {\bf 5} (1882), 251--330.
\bibitem{Uch} K. Uchimura, An identity for the divisor generating function arising from sorting theory, J. Combin. Theory Ser. A {\bf 31:2} (1981), 131--135.
\end{thebibliography}
\end{document}